\documentclass[10pt,a4paper]{amsart}
\usepackage{amssymb,amsmath}
\usepackage[cm,headings]{fullpage}
\usepackage{array}
\usepackage{booktabs}
\usepackage{multirow}
\usepackage{url}
\usepackage{cellspace}
\setlength\cellspacetoplimit{3pt}
\setlength\cellspacebottomlimit{3pt}
\usepackage{makecell}
\usepackage{multirow, array}

\DeclareMathOperator{\mydim}{dim_2}

\newtheorem{theorem}{Theorem}
\newtheorem{corollary}[theorem]{Corollary}

\newtheorem{proposition}[theorem]{Proposition}

\theoremstyle{definition}

\theoremstyle{definition}

\theoremstyle{definition}

\theoremstyle{definition}

\title{The two-distance sets in dimension four}
\author{Ferenc Sz\"oll\H{o}si}
\thanks{\today, preprint. This research was supported in part by the Academy of Finland, Grant \#289002}
\address{F. Sz.: Department of Communications and Networking, Aalto University School of Electrical Engineering, P.O. Box 15400, 00076 Aalto, Finland}
\email{szoferi@gmail.com}

\begin{document}
\begin{abstract}
A finite set of distinct vectors $\mathcal{X}$ in the $d$-dimensional Euclidean space $\mathbb{R}^d$ is called a $2$-distance set, if the set of mutual distances between distinct elements of $\mathcal{X}$ has cardinality exactly $2$. In this note we classify the $2$-distance sets in $\mathbb{R}^4$ up to isometry with computer-aided methods.
\end{abstract}
\maketitle
\section{Introduction and main results}
Let $d\geq1$ be an integer, and let $\mathbb{R}^d$ denote the $d$-dimensional Euclidean space equipped with the standard inner product $\left\langle.,.\right\rangle$ and norm induced metric $\mu$. Following the terminology of \cite{M}, a Euclidean representation of a simple graph $\Gamma$ on $n\geq1$ vertices is an embedding $f$ (with real parameters $\alpha_2>\alpha_1>0$) of the vertex set of $\Gamma$ into $\mathbb{R}^d$ such that for different vertices $u\neq v$ we have $\mu(f(u),f(v))=\alpha_1$ if and only if $\{u,v\}$ is an edge of $G$, and $\mu(f(u),f(v))=\alpha_2$ otherwise. The smallest $d$ for which such a representation exist is denoted by $\mydim\Gamma$. If $\Gamma$ is neither complete, nor empty, then its image under $f$ is called an $n$-element $2$-distance set \cite{BBS}, \cite{ES}, \cite{EF}, \cite{L}, \cite{N}, \cite{WX}. The representation, as well as the $2$-distance set is called spherical, if the image of $f$ lies on the $(d-1)$-sphere of radius $1$ in $\mathbb{R}^d$ \cite{M2}, \cite{M3}. A spherical representation is called J-spherical \cite[Definition~4.1]{M}, if $\alpha_1=\sqrt2$. Graphs on $n\geq d+2$ vertices having a $J$-spherical representation in $\mathbb{R}^d$ are in a certain sense extremal \cite{RK}. We remark that other authors relax the condition $\alpha_2>\alpha_1$ thus essentially identifying the same $2$-distance set with a graph $\Gamma$ and its complement $\overline{\Gamma}$ \cite{NS}, \cite{R}.

Motivated by a recent problem posed in \cite[Section~4.3]{M}, we continue the computer-aided generation and classification of $2$-distance sets in Euclidean spaces \cite{FSZ}, a program initiated originally in \cite{L}. In particular, we describe the $2$-distance sets in $\mathbb{R}^4$, that is, we determine all simple graphs $\Gamma$ with $\mydim \Gamma=4$. Since all such graphs are known on at most $6$ vertices \cite{ES}, \cite{M}, and it is known that there are no such graphs on more than $10$ vertices \cite{L}, the aim of this note is to close this gap by classifying the graphs in the remaining cases. The main result is the following.
\begin{theorem}\label{mytheorem1}
The number of $n$-element $2$-distance sets in $\mathbb{R}^4$ for $n\in\{7,8,9\}$ is $33$, $20$, and $5$ up to isometry.
\end{theorem}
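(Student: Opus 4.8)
\emph{Plan of proof.} The argument is necessarily computer-aided; here is how I would organize it.

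\textbf{Reduction to a finite search over graphs.} First I would normalize: after a homothety assume the shorter distance is $\alpha_1=1$ and set $t:=\alpha_2^2>1$. An $n$-element $2$-distance set realizing a graph $\Gamma$ on $\{1,\dots,n\}$ then has squared-distance matrix
\[
 D(\Gamma,t)=A+t(J-I-A),
\]
where $A=A(\Gamma)$ is the adjacency matrix and $J$ the all-ones matrix. By Schoenberg's criterion such a configuration exists in $\mathbb R^4$ exactly when the centered Gram matrix $B(\Gamma,t):=-\tfrac12\,P\,D(\Gamma,t)\,P$ with $P:=I-\tfrac1n J$ satisfies $B(\Gamma,t)\succeq0$ and $\rank B(\Gamma,t)\le4$, and then the configuration is unique up to isometry. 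Since a $2$-distance set in $\mathbb R^3$ has at most $6$ points (the octahedron being extremal), for $n\ge7$ no realization in $\mathbb R^{\le3}$ exists, so $\rank B(\Gamma,t)=4$ is forced and $\mydim\Gamma=4$ holds automatically. Hence, up to isometry, the $n$-element $2$-distance sets in $\mathbb R^4$ correspond bijectively to the pairs $(\Gamma,t)$ — $\Gamma$ a non-complete, non-empty graph up to isomorphism, $t>1$ — for which $B(\Gamma,t)\succeq0$ and $\rank B(\Gamma,t)\le4$, and the theorem reduces to enumerating these pairs for $7\le n\le9$.

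\textbf{Exact parameter search for a fixed graph.} For a fixed $\Gamma$, $\rank B(\Gamma,t)\le4$ is equivalent to the simultaneous vanishing of all $5\times5$ minors of $B(\Gamma,t)$, equivalently of the top $n-4$ coefficients of its characteristic polynomial — polynomials in $t$ with rational coefficients of degree at most $5$, the entries of $B$ being affine in $t$. For $n\ge7$ this system is overdetermined and in each case that arises its common real solution set is finite, so I would form the gcd $g\in\mathbb Q[t]$ of these polynomials, isolate its real roots exactly, and keep those $t_0>1$ at which $B(\Gamma,t_0)$ is positive semidefinite (tested exactly over $\mathbb Q(t_0)$, e.g.\ via signs of leading principal minors). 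Each surviving $t_0$, an algebraic number of degree at most $5$, certifies exactly one $n$-element $2$-distance set in $\mathbb R^4$ with graph $\Gamma$; a given $\Gamma$ may yield none, one, or a few of them.

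\textbf{Generation of candidate graphs and assembly.} To avoid testing all graphs I would use heredity: deleting a vertex from an $n$-element $2$-distance set in $\mathbb R^4$ with $n\ge7$ leaves an $(n-1)$-point set in $\mathbb R^{\le4}$ which cannot be a regular simplex (a $6$-point simplex already needs $\mathbb R^5$), hence is again a genuine $2$-distance set in $\mathbb R^4$; so every realizable graph on $n$ vertices is a one-vertex extension of a realizable graph on $n-1$ vertices. Seeding from the known classification on $6$ vertices \cite{ES},\cite{M}, I would generate for $n=7$, then $8$, then $9$ every realizable graph on $n$ vertices by adjoining a new vertex to each realizable $(n-1)$-vertex graph in all non-isomorphic ways — using canonical augmentation (e.g.\ via \emph{nauty}) to reject isomorphic copies — and run the test above on each candidate. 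Grouping the surviving pairs $(\Gamma,t_0)$ by isomorphism type and counting isometry classes then yields $33$, $20$, $5$.

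\textbf{The main obstacle.} The two delicate points, and the reason the proof is computer-aided, are: keeping the graph generation provably exhaustive yet of manageable size (addressed by the hereditary pruning, which confines the search to a small fraction of all graphs on $\le9$ vertices), and carrying out the parameter search in exact arithmetic — the admissible $t$ are algebraic of small degree, and a single missed or spurious real root would falsify the count, so floating-point semidefinite feasibility can only serve as a pre-filter and every surviving case must be re-verified symbolically. The remaining work — canonical forms, and bookkeeping the finitely many parameters per graph — is routine.
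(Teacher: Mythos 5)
Your plan is essentially the paper's own proof: reduce to a finite search over graphs via a one- (or two-) parameter candidate Gram matrix whose entries are affine in the distance parameters, impose $\rank\le 4$ through vanishing of $5\times5$ minors, solve the resulting polynomial system exactly, check positive semidefiniteness, and generate candidates hereditarily by one-vertex extensions starting from the known $6$-vertex classification (the paper does this with a backtrack search over candidate Gram matrices, treats the spherical case with the two parameters $a=1-\alpha_1^2/2$, $b=1-\alpha_2^2/2$ and the general case with the Menger matrix centered at a point of the set rather than at the centroid, and solves the minor systems with Gr\"obner bases rather than a univariate gcd --- all inessential variations). One caveat: testing positive \emph{semi}definiteness at a root $t_0$ via signs of \emph{leading} principal minors is not valid (consider $\mathrm{diag}(0,-1)$); since the surviving matrices are singular by construction and indefinite candidates genuinely occur (the paper discards one such $7\times7$ matrix), you must use all principal minors or, as the paper does, the sign pattern of the characteristic polynomial's coefficients, otherwise the counts could be inflated.
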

The proof is in part computational, and easily follows from the theory developed earlier in \cite{FSZ}, which we briefly outline here for completeness as follows. Assume that $\Gamma$ is a graph with vertices $v_1$, $\dots$, $v_n$. Let $a$ and $b$ be indeterminates, and associate to $\Gamma$ a ``candidate Gram matrix'' $G(a,b):=aA(\Gamma)+bA(\overline{\Gamma})+I$, where $A(\Gamma)$ is the graph adjacency matrix, and $I$ is the identity matrix of order $n$. Now let $f$ be a spherical representation of $\Gamma$ (with parameters $\alpha_2>\alpha_1>0$ as usual) in $\mathbb{R}^d$. Then the Gram matrix of the representation can be written as $[\left\langle f(v_i),f(v_j)\right\rangle]_{i,j=1}^n=G(1-\alpha_1^2/2,1-\alpha_2^2/2)$. This correspondence allows us to construct a representation based on solely $A(\Gamma)$ by exploiting that the rank of $G(1-\alpha_1^2/2,1-\alpha_2^2/2)$ is at most $d$. Indeed, if we are given a candidate Gram matrix $G(a,b)$, then those values $a^\ast,b^\ast\in\mathbb{C}$ for which $G(a^\ast,b^\ast)$ has a certain rank can be found by considering the set of $(d+1)\times (d+1)$ minors of $G(a,b)$ which should all be vanishing. The arising system of polynomial equations can be analyzed by a standard Gr\"obner basis computation \cite{cocoa}, \cite{GBBOOK}, as detailed in \cite{FSZ}. In particular, if no common solutions are found, then the candidate Gram matrix (as well as both $\Gamma$ and its complement) should be discarded as it cannot correspond to a spherical $2$-distance set in $\mathbb{R}^d$. On the other hand, if some solutions are found, then the candidate Gram matrix survives the test, and one should further ascertain that $G(a^\ast,b^\ast)$ is a positive semidefinite matrix. This can be done by investigating the signs of the coefficients of its characteristic polynomial \cite[Corollary~7.2.4]{HJ}.

The general case (i.e., when $f$ is not necessarily spherical) is analogous, but slightly more technical as the image of $f$ should be translated to the origin first, and then the Gram matrix of this shifted set (which is sometimes called Menger's matrix) should be considered \cite[Section~7.1]{L}, \cite[Section~4]{FSZ}. In particular, we have
\begin{equation*}
\left\langle f(v_i)-f(v_n),f(v_j)-f(v_n)\right\rangle=\left(G(\alpha_1^2,\alpha_2^2)_{i,n}+G(\alpha_1^2,\alpha_2^2)_{j,n}-G(\alpha_1^2,\alpha_2^2)_{i,j}+I_{ij}\right)/2,\qquad i,j\in\{1,\dots,n-1\}.
\end{equation*}
The right hand side describes the entries of a positive semidefinite matrix of rank at most $d$, which depends on $A(\Gamma)$ only. This rank condition can be treated in a similar way as discussed previously. In Table~\ref{Table1} we summarize the number of surviving candidate Gram matrices found by a simple backtrack search, and the number of corresponding $2$-distance sets. The entry marked by an asterisk indicates that $6$ out of the $42$ cases are actually the maximum $2$-distance sets in $\mathbb{R}^3$, see \cite[Section~10]{ES}. The proof of Theorem~\ref{mytheorem1} can be obtained by setting $d=4$ and then analyzing one by one the surviving candidate Gram matrices and the corresponding graphs on $n\in\{7,8,9\}$ vertices. 

\begin{table}[htbp]%
\tiny
\begin{tabular}{lrrrrrr}
\hline
$n$                                   &   6 &   7 &    8 &      9 &  10 & 11\\
\hline
\#Graphs up to complements            &  78 & 522 & 6178 & 137352 & 6002584 & 509498932\\
\#Surviving candidate Gram matrices (all)       &  77 &  22 &   13 &      4 &       1 &  0\\
\#Surviving candidate Gram matrices (spherical) &  30 &  17 &    6 &      2 &       1 &  0\\
\#Spherical $2$-distance sets         &  $^\ast$42 &  23 &    7 &      2 &       1 &  0\\
\#Nonspherical $2$-distance sets      & 103 &  10 &   13 &      3 &       0 &  0\\
\hline
\end{tabular}
\caption{The number of candidate Gram matrices and $2$-distance sets in $\mathbb{R}^4$}
\label{Table1}
\normalsize
\end{table}
It is known that the maximum cardinality of a $2$-distance set in $\mathbb{R}^4$ is exactly $10$, and the unique configuration realizing this corresponds to the triangular graph $T(5)$, see \cite{L}. We have verified this result independently. Indeed, our computer program identified a single $10\times 10$ candidate Gram matrix, which cannot be extended any further, and whose spherical representation is shown in Table~\ref{TableSph8910}. While the subgraphs of $T(5)$ are obviously spherical $2$-distance sets embedded in $\mathbb{R}^4$, there are several additional examples as follows.
\begin{proposition}
The number of $9$-point $2$-distance sets in $\mathbb{R}^4$ is $5$, out of which $2$ are spherical.
\end{proposition}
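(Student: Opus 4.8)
The plan is to specialize the procedure described above to the parameters $d=4$ and $n=9$ and to execute it with the aid of a computer. The starting observation is that $2$-distance sets are hereditary: deleting a point from a $2$-distance set in $\mathbb{R}^4$ leaves a configuration whose pairwise distances take at most two values, so (unless it degenerates to an equidistant set or to a regular simplex) it is again a $2$-distance set in some $\mathbb{R}^{d'}$ with $d'\le 4$. Consequently every $9$-vertex graph $\Gamma$ whose candidate Gram matrix $G(a,b)=aA(\Gamma)+bA(\overline\Gamma)+I$ (respectively, whose associated Menger matrix assembled from $G(\alpha_1^2,\alpha_2^2)$ in the nonspherical case) can have rank at most $4$ and be positive semidefinite must restrict, on each of its $\binom{9}{8}=9$ induced $8$-vertex subgraphs, to one of the graphs surviving the $n=8$ analysis. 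A backtrack search that adjoins one vertex at a time therefore never needs to enumerate all $137352$ graphs on $9$ vertices up to complementation; it prunes aggressively against the short lists already produced for $n\le 8$.

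For each graph $\Gamma$ that survives the combinatorial pruning I would impose the rank bound by setting all $5\times5$ minors of the relevant matrix to zero, obtaining a system of polynomial equations in the two unknowns $a,b$ (equivalently $\alpha_1^2,\alpha_2^2$). A Gr\"obner basis computation over $\mathbb{Q}$ \cite{cocoa}, \cite{GBBOOK} then either proves this variety empty --- in which case $\Gamma$ and $\overline\Gamma$ are discarded --- or returns finitely many candidate parameter pairs $(a^\ast,b^\ast)$. As recorded in Table~\ref{Table1}, exactly $4$ candidate Gram matrices survive this step for $n=9$, two of which already pass in the spherical subsearch. For each survivor and each admissible solution one then (i) checks positive semidefiniteness of $G(a^\ast,b^\ast)$ via the sign pattern of the coefficients of its characteristic polynomial \cite[Corollary~7.2.4]{HJ}, (ii) verifies that the two distances $0<\alpha_1<\alpha_2$ are real, (iii) confirms that the rank equals $4$ exactly, so that the configuration is genuinely $4$-dimensional, and (iv) records the isometry type of the realization. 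Tallying these over the $4$ surviving matrices produces $5$ pairwise non-isometric $2$-distance sets, exactly $2$ of which are spherical; one of the spherical ones is the (unique up to isometry) $9$-point subconfiguration of $T(5)$, and the remaining four are the ``additional examples'' mentioned above.

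The principal obstacle is the size of the search rather than any single hard argument. With hundreds of thousands of $9$-vertex graphs the computation is tractable only because the subgraph-closure test is strong --- each survivor must induce, on every one of its nine $8$-point subsets, a graph from the very short $n=8$ list --- and because the Gr\"obner bases attached to the discarded graphs terminate quickly with the empty variety. A secondary subtlety lies on the surviving side: a single candidate Gram matrix may admit several valid parameter pairs, hence several pairwise non-isometric $2$-distance sets, which is precisely why $4$ matrices give rise to $5$ sets; one must therefore extract \emph{all} solutions of each Gr\"obner system rather than merely detect solvability, and then carefully identify and remove duplicate isometry classes. Finally, I would cross-check the output against the known classification on at most $6$ vertices \cite{ES}, \cite{M}, against the independently reconfirmed fact that $T(5)$ is the unique $10$-point configuration, and against the non-existence of $11$-point examples \cite{L}.
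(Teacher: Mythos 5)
Your proposal is correct and follows essentially the same route as the paper: a backtrack search over candidate Gram matrices pruned by the rank-$\le 4$ condition (vanishing $5\times5$ minors analyzed via Gr\"obner bases) and positive semidefiniteness, yielding $4$ surviving matrices ($2$ spherical) that realize $5$ isometry classes, with the multiplicity coming from a matrix admitting two admissible parameter pairs. The only detail you leave unnamed, which the paper supplies, is the identification of the second spherical example as the Paley graph on $9$ vertices.
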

\begin{proof}
Our computer program generated $4$ candidate Gram matrices in the general case, and $2$ in the spherical case, see Table~\ref{Table1}. The two spherical cases correspond to the $9$-vertex subgraph of $T(5)$, and to the Paley-graph, see Table~\ref{TableSph8910}. The remaining two candidate Gram matrices correspond to three nonspherical $2$-distance sets, see Table~\ref{TableGen789}.
\end{proof}
\begin{proposition}
The number of $8$-point $2$-distance sets in $\mathbb{R}^4$ is $20$, out of which $7$ are spherical.
\end{proposition}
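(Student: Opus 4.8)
The proof is entirely parallel to that of the previous proposition, now carried out for $n=8$. The plan is to run the backtrack search and rank-reduction procedure described above over all simple graphs on $8$ vertices; since a $2$-distance set is identified with a graph and its complement, it suffices to examine the $6178$ isomorphism classes of graphs up to complementation recorded in Table~\ref{Table1}. For each such graph $\Gamma$ I would first perform the spherical test: form the candidate Gram matrix $G(a,b)=aA(\Gamma)+bA(\overline{\Gamma})+I$, impose $\rank G(a,b)\leq 4$ by equating all of its $5\times 5$ minors to zero, and analyze the resulting polynomial system by a Gr\"obner basis computation to extract the admissible pairs $(a^\ast,b^\ast)$, namely those for which $G(a^\ast,b^\ast)$ is positive semidefinite --- checked through the signs of the coefficients of its characteristic polynomial --- and which correspond to real distance parameters $\alpha_2>\alpha_1>0$. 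This leaves the $6$ surviving spherical candidate Gram matrices of Table~\ref{Table1}, whose admissible solution branches realize $7$ spherical configurations in $\mathbb{R}^4$.

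For the remaining, not necessarily spherical, configurations I would repeat the computation with the order-$7$ shifted Gram matrix (Menger's matrix) whose entries are given by the displayed identity above, again forcing rank at most $4$ through the vanishing of all its $5\times 5$ minors, running the Gr\"obner basis computation, and keeping only the positive semidefinite solutions with $\alpha_2>\alpha_1>0$. This produces the $13$ surviving candidate Gram matrices of the general case, among which lie the $6$ spherical survivors; reading off from their admissible solution branches the corresponding point configurations --- each determined up to isometry by its rank-$4$ Gram matrix --- and discarding the spherical ones already counted, one obtains exactly $13$ genuinely nonspherical $2$-distance sets. Since the maximum cardinality of a $2$-distance set in $\mathbb{R}^3$ is $6$, every such $8$-point set automatically has $\mydim\Gamma=4$, so the total number of $8$-point $2$-distance sets in $\mathbb{R}^4$ is $7+13=20$, of which $7$ are spherical, as claimed.

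The principal obstacle is the scale of the enumeration: the $6178$ graphs to be tested rule out a by-hand argument, and a number of the Gr\"obner basis computations --- especially in the general case, where the variety cut out by the $5\times 5$ minors can be positive-dimensional --- demand care in the choice of term order as well as in the ensuing semidefiniteness and parameter checks. A further point requiring attention is the bookkeeping needed to split the $20$ configurations into $7$ spherical and $13$ nonspherical ones without conflation or double-counting; this is unproblematic here, since isometric $2$-distance sets induce the same graph up to complementation and the same unordered pair of distances, so that the two tallies are disjoint and each configuration is recovered from the candidate Gram matrix of its underlying graph.
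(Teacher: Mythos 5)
Your proposal follows essentially the same route as the paper: both rest on the same computer search (candidate Gram matrices, vanishing $5\times5$ minors, Gr\"obner bases, positive semidefiniteness and parameter checks), and your tallies agree with Table~\ref{Table1}. The only detail the paper makes explicit that you leave implicit is the breakdown of how the $7$ purely non-spherical candidate Gram matrices yield $13$ sets (two each from the $\pm$ solution branches, except the self-complementary one, which yields only one) and how the $6$ spherical candidates yield $7$ sets (five yield one each, one yields two).
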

\begin{proof}
Our computer program generated $13$ candidate Gram matrices in the general case, and $6$ in the spherical case, see Table~\ref{Table1}. In the spherical case five out of the six candidate Gram matrices yielded one spherical $2$-distance set, while one resulted in two, see Table~\ref{TableSph8910}. The remaining $7$ candidate Gram matrices correspond to two nonspherical $2$-distance sets each, except for the one which is self-complementary. See Table~\ref{TableGen789} for some details.
\end{proof}
\begin{table}[htbp]%
\tiny
\begin{tabular}{rllll}
\hline
$n$  & $G(a,b)$ & $G(1,0)$ & $(a^\ast,b^\ast)$ & Remark\\
\hline
$10$ & \texttt{aaaaaaaabbababaabbaaabaabaabbabaabaabbaabaaaa} & $\Gamma_{10A}$ & $(1/6,-2/3)$ & $T(5)$, $\mydim(\overline{\Gamma}_{10A})=5$\\
$9$  & \texttt{aaaaaaaabbababaabbaaabaabaabbabaabaa} & $\Gamma_{9A}$ & $(1/6,-2/3)$ & $\Gamma_{9A}\sim\Gamma_{10A}\setminus\{\ast\}$, $\mydim(\overline{\Gamma}_{9A})=5$\\
$9$  & \texttt{aaaabbabbabababbabbaabbaababbbababaa} & $\Gamma_{9B}$ & $(1/4,-1/2)$ & Paley, self-complementary\\
$8$  & \texttt{aaaaaaaaabaabaaabaaaabaaaaaa} & $\Gamma_{8A}$ & $(0,-1)$ & $16$-cell, J-spherical, $\mydim(\overline{\Gamma}_{8A})=7$\\
$8$  & \texttt{aaaaaaaaabaabababaabbbaaabbb} & $\Gamma_{8B}$ & $((1\pm\sqrt5)/4,1/2)$ & \\
$8$  & \texttt{aaaaaaaabbababababaabbbaabaa} & $\Gamma_{8C}$ & $(1/6,-2/3)$ & $\mydim(\overline{\Gamma}_{8C})=5$\\
$8$  & \texttt{aaaaaaabbbbabbabbabaabbbaaaa} & $\Gamma_{8D}$ & $(1/5,-3/5)$ & $\mydim(\overline{\Gamma}_{8D})=6$\\
$8$  & \texttt{aaaaaaaabbababaabbaaabaabaab} & $\Gamma_{8E}$ & $(1/6,-2/3)$ & $\Gamma_{8E}\sim\Gamma_{9A}\setminus\{\ast\}$, $\mydim(\overline{\Gamma}_{8E})=5$\\
$8$  & \texttt{aaaabbabbabababbabbaabbaabab} & $\Gamma_{8F}$ & $(1/4,-1/2)$ & $\Gamma_{8F}\sim\Gamma_{9B}\setminus\{\ast\}$, self-complementary\\
\hline
\end{tabular}
\caption{Spherical $2$-distance sets on $n\in\{8,9,10\}$ points in $\mathbb{R}^4$}
\label{TableSph8910}
\normalsize
\end{table}
In the tables the vectorization (i.e., row-wise concatenation) of the lower triangular part of a graph adjacency matrix of order $n$ is denoted by a string of letters \texttt{a} and \texttt{b} of length $n(n-1)/2$, where letter \texttt{a} indicates adjacent vertices.
\begin{proposition}
The number of $7$-point $2$-distance sets in $\mathbb{R}^4$ is $33$, out of which $23$ are spherical.
\end{proposition}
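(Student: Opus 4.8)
The plan is to specialize the computational framework of Section~1 (following \cite{FSZ}) to $d=4$ and $n=7$, exactly as in the two preceding propositions. First I would enumerate the $522$ simple graphs on $7$ vertices up to taking complements, and for each such graph $\Gamma$ form its candidate Gram matrix $G(a,b)=aA(\Gamma)+bA(\overline{\Gamma})+I$ and, separately, the Menger-type matrix of the configuration translated to the origin. Imposing the rank bound amounts to requiring that every $5\times5$ minor of the relevant matrix vanish; I would feed the resulting polynomial system in the two indeterminates $a,b$ to a Gr\"obner basis engine \cite{cocoa}, \cite{GBBOOK}, discarding $\Gamma$ (and $\overline{\Gamma}$) whenever the associated ideal has no common solution. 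According to Table~\ref{Table1} this filtering leaves $22$ candidate Gram matrices in the general problem, of which $17$ also survive the spherical test.

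Next, for each surviving specialization $(a^\ast,b^\ast)$ I would certify positive semidefiniteness of $G(a^\ast,b^\ast)$ by inspecting the signs of the coefficients of its characteristic polynomial \cite[Corollary~7.2.4]{HJ}; only matrices passing this check correspond to an actual $2$-distance set in $\mathbb{R}^4$. Finally I would carry out the bookkeeping that converts candidate Gram matrices into isometry classes: a single candidate matrix may give rise to zero, one, or two genuine $2$-distance sets, depending on whether the admissible values of $(a^\ast,b^\ast)$ are real, whether the induced parameters satisfy $\alpha_2>\alpha_1$, and whether $\Gamma$ is isomorphic to $\overline{\Gamma}$. Performing this count over the $17$ spherical and the $22$ general candidate matrices yields $23$ spherical and $10$ nonspherical examples, hence $33$ in total.

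The main obstacle is computational and organizational rather than conceptual: the enumeration together with the numerous Gr\"obner basis computations must be structured so that no case is overlooked, and one must handle degenerate outputs carefully --- candidate matrices whose only rank-$4$ specializations are non-real, or whose real specializations collapse the two distances into one (producing a complete or empty graph rather than a $2$-distance set), or which are in fact realized in a proper subspace of $\mathbb{R}^4$. Consistency can be gained by cross-checking against the known classification for $n\le 6$ \cite{ES}, \cite{M}, against the nonexistence of $2$-distance sets on more than $10$ points in $\mathbb{R}^4$ \cite{L}, and against the nested structure visible in Table~\ref{TableSph8910}, where several $7$-point sets arise as induced subconfigurations of the larger examples.
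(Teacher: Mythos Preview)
Your proposal is correct and follows essentially the same computational route as the paper: enumerate the $522$ graph--complement pairs, filter via Gr\"obner bases on the $5\times5$ minors to obtain $22$ general and $17$ spherical candidate Gram matrices, then apply the positive semidefiniteness test and the $\Gamma$/$\overline{\Gamma}$ bookkeeping. The paper records the same intermediate data you anticipate --- one of the $17$ spherical candidates is indefinite and discarded, the surviving $16$ yield $23$ spherical sets, and the $5=22-17$ remaining nonspherical candidates contribute two sets each for a total of $10$.
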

\begin{proof}
Our computer program generated $22$ candidate Gram matrices in the general case, and $17$ in the spherical case, see Table~\ref{Table1}. In the spherical case there was a single matrix which did not correspond to any $2$-distance sets as it turned out to be indefinite. All the other candidate Gram matrices yielded at least one spherical $2$-distance set, see Table~\ref{TableSph7}. The remaining $5$ candidate Gram matrices correspond to two nonspherical $2$-distance sets each, see Table~\ref{TableGen789}.
\end{proof}
\begin{table}[htbp]%
\tiny
\begin{tabular}{rllll}
\hline
$n$  & $G(a,b)$ & $G(1,0)$ & $(a^\ast,b^\ast)$ & Remark\\
\hline
$7$ & \texttt{aaaaaaaaabaabaaabaaaa} & $\Gamma_{7A}$ & $(0,-1)$ & J-spherical, $\mydim(\overline{\Gamma}_{7A})=6$\\
$7$ & \texttt{aaaaaaaaabaababaabbaa} & $\Gamma_{7B}$ & $((1\pm\sqrt{5})/4,1/2)$ &\\
$7$ & \texttt{aaaaaaaaabaabababaabb} & $\Gamma_{7C}$ & $((1\pm\sqrt{5})/4,1/2)$ &\\
$7$ & \texttt{aaaaaaaabbababaabbaaa} & $\Gamma_{7D}$ & $(1/6,-2/3)$ & $\mydim(\overline{\Gamma}_{7D})=5$\\
$7$ & \texttt{aaaaaaaabbababababaab} & $\Gamma_{7E}$ & $(1/6,-2/3)$ & $\mydim(\overline{\Gamma}_{7E})=5$\\
$7$ & \texttt{aaaaaaaabbababbabbabb} & $\Gamma_{7F}$ & $((1\pm\sqrt5)/4,1/2)$ &\\
$7$ & \texttt{aaaaaaaabbababbbaabbb} & $\Gamma_{7G}$ & $((1\pm\sqrt5)/4,1/2)$ &\\
$7$ & \texttt{aaaaaaabbbbabbabbabaa} & $\Gamma_{7H}$ & $(1/5,-3/5)$ & $\mydim(\overline{\Gamma}_{7F})=5$\\
$7$ & \texttt{aaaaaaaabbabababaabaa} & $\Gamma_{7I}$ & $(1/6,-2/3)$ & $\mydim(\overline{\Gamma}_{7G})=5$\\
$7$ & \texttt{aaaaababaabaaaabbbbbb} & $\Gamma_{7J}$ & $(1/4,-1/2)$ & $\mydim(\overline{\Gamma}_{7H})=5$\\
$7$ & \texttt{aaaaababaabaaabbbaaaa} & $\Gamma_{7K}$ & $((-1-\sqrt5)/8,3(-1+\sqrt5)/8)$ & $\mydim(\Gamma_{7K})=5$\\
$7$ & \texttt{aaaaababaabaabbbbbaaa} & $\Gamma_{7L}$ & $(1/6,-2/3)$ & $\mydim(\overline{\Gamma}_{7L})=5$\\
$7$ & \texttt{aaaaababababbaaabbbbb} & $\Gamma_{7M}$ & $((1\pm\sqrt5)/4,1/2)$ &\\
$7$ & \texttt{aaaaababababbababbbaa} & $\Gamma_{7N}$ & $(-5/12,7/24)$ & $\mydim(\Gamma_{7N})=5$\\
$7$ & \texttt{aaaaabababbabaabbaaaa} & $\Gamma_{7O}$ & $(\alpha^{\pm},-1/2-2\alpha^{\pm})$ & $8\alpha^3+32\alpha^2+10\alpha-1=0, |\alpha^\pm|\leq1$\\
$7$ & \texttt{aaaabbabbabababbabbaa} & $\Gamma_{7P}$ & $((-1\pm3)/8,(-1\mp3)/8)$ & \\
\hline
\end{tabular}
\caption{Spherical $2$-distance sets on $n=7$ points in $\mathbb{R}^4$}
\label{TableSph7}
\normalsize
\end{table}
\begin{proposition}[\mbox{cf.~\cite[p.~494]{ES}, \cite[Section~4.3]{M}}]
The number of $6$-point $2$-distance sets in $\mathbb{R}^4$ is $145$. The number of $6$-point spherical $2$-distance sets in $\mathbb{R}^4$ is $42$, out of which $6$ are in fact the maximum $2$-distance sets in $\mathbb{R}^3$.
\end{proposition}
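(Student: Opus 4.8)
The plan is to run the computer-aided classification already used for $n\in\{7,8,9\}$, now with the parameters $n=6$ and $d=4$. First I would enumerate, by a backtrack search, the $78$ graphs on six vertices up to complementation, and attach to each such $\Gamma$ the candidate Gram matrix $G(a,b)=aA(\Gamma)+bA(\overline{\Gamma})+I$ of order $6$. For the spherical part I would impose $\rank G(a,b)\leq4$ by collecting all $5\times5$ minors of $G(a,b)$ and computing a Gr\"obner basis of the ideal they generate \cite{cocoa}, discarding $\Gamma$ whenever the resulting variety is empty. Then, for every surviving solution $(a^\ast,b^\ast)$, I would keep only the real ones for which $G(a^\ast,b^\ast)$ is positive semidefinite --- tested through the signs of the coefficients of its characteristic polynomial \cite[Corollary~7.2.4]{HJ} --- and for which $a^\ast$ and $b^\ast$ are distinct and both different from $1$, so that $G(a^\ast,b^\ast)$ is the Gram matrix of six distinct unit vectors in $\mathbb{R}^4$ realizing exactly two distances $\alpha_1=\sqrt{2(1-a^\ast)}<\alpha_2=\sqrt{2(1-b^\ast)}$ (after possibly interchanging $\Gamma$ and $\overline{\Gamma}$). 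Realizing each admissible $G(a^\ast,b^\ast)$ as a point set and sorting the resulting list up to isometry --- via a graph isomorphism test that also compares the attached distances --- should leave, as recorded in Table~\ref{Table1}, exactly $30$ surviving candidate Gram matrices and $42$ spherical $2$-distance sets.

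Among these $42$ I would then single out the six that in fact live in $\mathbb{R}^3$. A $6$-point $2$-distance set cannot be realized in $\mathbb{R}^2$, so each realizing Gram matrix has rank $3$ or $4$, and those of rank $3$ are precisely the sets that already fit into $\mathbb{R}^3$; since six points already achieve the maximum cardinality of a $2$-distance set in $\mathbb{R}^3$, each of these is a maximum $2$-distance set there, and a comparison with \cite[Section~10]{ES} should confirm that there are exactly six of them. For the non-spherical count I would repeat the procedure with Menger's matrix of order $n-1=5$ attached to $G(\alpha_1^2,\alpha_2^2)$, as in \cite[Section~4]{FSZ}; for $n=6$ the requirement that this matrix have rank at most $4$ is simply the vanishing of its determinant, and I would keep the real solutions for which it is positive semidefinite and $0<\alpha_1<\alpha_2$. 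Sorting these outcomes up to isometry should show that $77$ candidate Gram matrices survive the general test, that $30$ of them coincide with the spherical survivors found above, and that the remaining $77-30=47$ account for $103$ non-spherical $2$-distance sets; since the spherical and non-spherical families are disjoint, the total is $42+103=145$, in agreement with the classical count of \cite[p.~494]{ES} and \cite[Section~4.3]{M}.

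The Gr\"obner basis computations are routine for such small $n$, so the step I expect to be the main obstacle is the final, purely combinatorial one. A single surviving candidate Gram matrix may give rise to several pairwise non-isometric $2$-distance sets when its solution set contains more than one admissible point --- exactly as already occurs for $n=8$ --- whereas different graphs, and for a fixed graph different sign choices for the two distances, may give rise to the same configuration up to isometry; hence the passage from ``surviving matrices'' to ``distinct configurations'' requires a careful isometry classification rather than a naive tally, and this is the place where an implementation error would be hardest to detect. One must also check, for every retained solution, that the configuration has \emph{exactly} two distinct distances and the claimed affine dimension, so that no nominally $4$-dimensional example secretly collapses below dimension four beyond the six already accounted for. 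Because the final numbers reproduce those of Einhorn and Schoenberg, this step also doubles as an independent verification of their classification.
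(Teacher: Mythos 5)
Your pipeline for the spherical count is essentially the paper's: $30$ surviving candidate Gram matrices, elimination of the indefinite ones, and a tally of $42$ spherical sets, of which $6$ are recognized as the maximum $2$-distance sets in $\mathbb{R}^3$. Where you genuinely diverge is the total of $145$. The paper does not recompute this number at all: it invokes the known fact (from \cite{ES}, \cite{M}) that a graph on $6$ vertices admits a Euclidean representation in $\mathbb{R}^4$ unless it is a disjoint union of cliques, and simply counts $156-11=145$, the $11$ being the partitions of $6$. Your route --- running the Menger-matrix rank test on all $78$ graphs up to complementation and adding $42+103$ --- is consistent with the data in Table~\ref{Table1} and doubles as an independent verification of Einhorn--Schoenberg, but it is a much heavier argument for a fact the literature already supplies in closed form; note also that your side remark that the $103$ nonspherical sets are accounted for by exactly the $77-30$ candidates failing the spherical test is not forced, since a graph surviving the spherical test may in addition admit nonspherical representations.

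One step of your argument is stated incorrectly, though it is repairable. You claim that among the spherical survivors ``those of rank $3$ are precisely the sets that already fit into $\mathbb{R}^3$.'' This conflates linear with affine dimension: a candidate Gram matrix $G(a^\ast,b^\ast)$ of rank $4$ can still describe six unit vectors whose \emph{affine} hull is only $3$-dimensional, namely when the centre of the sphere lies off that hull. Indeed, for any such configuration there is a one-parameter family of placements on the unit $3$-sphere (slide the affine hull away from the centre and rescale), so for these graphs the ideal of $5\times5$ minors has a positive-dimensional solution variety rather than isolated points --- e.g.\ for the octahedron the line $b=2a-1$ consists entirely of admissible pairs, of which the table records only the centred representative $(0,-1)$. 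The correct criterion for ``fits into $\mathbb{R}^3$'' is that the translated (Menger) Gram matrix has rank at most $3$; with that fix, your observation that any such set is automatically a \emph{maximum} $2$-distance set in $\mathbb{R}^3$, hence one of the six classified in \cite[Section~10]{ES}, gives the same conclusion the paper reaches by direct inspection of $\Gamma_{6K}$, $\Gamma_{6O}$, $\overline{\Gamma}_{6O}$, $\Gamma_{6R}$, $\overline{\Gamma}_{6R}$, and $\Gamma_{6Y}$.
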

\begin{proof}
It is known, see \cite{ES}, \cite{M}, that a graph on $6$ vertices can be represented in $\mathbb{R}^4$ unless it is a disjoint union of cliques. Since the total number of simple graphs on $6$ vertices is $156$, out of which $11$ are disjoint union of cliques, we find that $145$ graphs can be represented in $\mathbb{R}^4$. Our computer program generated $30$ candidate Gram matrices in the spherical case, see Table~\ref{Table1}. There were two indefinite matrices, and the remaining $28$ resulted in at least one spherical $2$-distance set each. Amongst these, we found the $6$ maximum $2$-distance sets in $\mathbb{R}^3$, denoted by $\Gamma_{6K}$, $\Gamma_{6O}$, $\overline{\Gamma}_{6O}$, $\Gamma_{6R}$, $\overline{\Gamma}_{6R}$, and $\Gamma_{6Y}$, see Table~\ref{TableSph6}.
\end{proof}
Finally, there are $7$ graphs $\Gamma$ on $n=5$ vertices for which $\mydim\Gamma=4$. One particular spherical representation is given of these in Table~\ref{TableSph5}. The number of corresponding nonisometric $2$-distance sets in $\mathbb{R}^4$ in these cases is infinite.
\begin{table}[htbp]%
\tiny
\begin{tabular}{rllll}
\hline
$n$  & $G(a,b)$ & $G(1,0)$ & $(a^\ast,b^\ast)$ & Remark\\
\hline
$6$ & \texttt{aaaaaaaaabaabaa} & $\Gamma_{6A}$ & $(0,-1)$ & J-spherical, $\mydim(\overline{\Gamma}_{6A})=5$\\
$6$ & \texttt{aaaaaaaaabaabab} & $\Gamma_{6B}$ & $((1\pm\sqrt5)/4,1/2)$ &\\
$6$ & \texttt{aaaaaaaabbaabba} & $\Gamma_{6C}$ & $(-1/3,1/3)$ & $\mydim(\Gamma_{6C})=4$\\
$6$ & \texttt{aaaaaaaabbababa} & $\Gamma_{6D}$ & $(1/6,-2/3)$ & $\mydim(\overline{\Gamma}_{6D})=4$\\
$6$ & \texttt{aaaaaaaabbababb} & $\Gamma_{6E}$ & $((1\pm\sqrt5)/4,1/2)$ &\\
$6$ & \texttt{aaaaaaaabbbbaaa} & $\Gamma_{6F}$ & $(0,\pm\sqrt{2}/2)$ & $\Gamma_{6F}$ is J-spherical\\
$6$ & \texttt{aaaaaaaabbbbaab} & $\Gamma_{6G}$ & $((-1\pm3)/12,(-1\mp3)/6)$ &\\
$6$ & \texttt{aaaaaaabbbbabba} & $\Gamma_{6H}$ & $(1/5,-3/5)$ & $\mydim(\overline{\Gamma}_{6H})=4$\\
$6$ & \texttt{aaaaaaabbbbabbb} & $\Gamma_{6I}$ & $(1/2,(1\pm\sqrt5)/4)$ &\\
$6$ & \texttt{aaaaababaaabbbb} & $\Gamma_{6J}$ & $(1/3,-1/3)$ & $\mydim(\overline{\Gamma}_{6J})=4$\\
$6$ & \texttt{aaaaababaabaaaa} & $\Gamma_{6K}$ & $(0,-1)$ & octahedron, J-spherical, $\mydim(\overline{\Gamma}_{6K})=5$\\
$6$ & \texttt{aaaaababaabaaab} & $\Gamma_{6L}$ & $((-1-\sqrt5)/8,3(-1+\sqrt5)/8)$ & $\mydim(\Gamma_{6L})=4$\\
$6$ & \texttt{aaaaababaabaabb} & $\Gamma_{6M}$ & $(1/6,-2/3)$ & $\mydim(\overline{\Gamma}_{6M})=4$\\
$6$ & \texttt{aaaaababaabbbbb} & $\Gamma_{6N}$ & $(1/4,-1/2)$ & $\mydim(\overline{\Gamma}_{6N})=4$\\
$6$ & \texttt{aaaaababababbaa} & $\Gamma_{6O}$ & $(\pm1/\sqrt5,\mp1/\sqrt5)$ & pentagonal pyramids \cite[Figure~4]{ES}\\
$6$ & \texttt{aaaaababababbab} & $\Gamma_{6P}$ & $(-5/12,7/24)$ & $\mydim(\Gamma_{6P})=4$\\
$6$ & \texttt{aaaaababababbbb} & $\Gamma_{6Q}$ & $((1\pm\sqrt5)/4,1/2)$ &\\
$6$ & \texttt{aaaaabababbaabb} & $\Gamma_{6R}$ & $(\pm1/\sqrt5,\mp1/\sqrt5)$ & pentahedrons \cite[Figure~4]{ES}\\
$6$ & \texttt{aaaaabababbabaa} & $\Gamma_{6S}$ & $(\alpha^{\pm},-1/2-2\alpha^{\pm})$ & $8\alpha^3+32\alpha^2+10\alpha-1=0, |\alpha^\pm|\leq1$\\
$6$ & \texttt{aaaaabababbabab} & $\Gamma_{6T}$ & $(\beta^{\pm},-1/2-\beta^{\pm}+2(\beta^{\pm})^2)$ & $8\beta^3-8\beta^2-2\beta+1=0, |\beta^\pm|\leq 1$\\
$6$ & \texttt{aaaaabababbbbaa} & $\Gamma_{6U}$ & $(1/6,-2/3)$ & $\mydim(\overline{\Gamma}_{6U})=4$\\
$6$ & \texttt{aaaaababbbbabba} & $\Gamma_{6V}$ & $(1/3,-1/3)$ & $\mydim(\overline{\Gamma}_{6V})=4$\\
$6$ & \texttt{aaaaabbbaabbaaa} & $\Gamma_{6W}$ & $(-3/7,2/7)$ & $\mydim(\Gamma_{6W})=4$\\
$6$ & \texttt{aaaabbabbababab} & $\Gamma_{6X}$ & $((-1\pm3)/8,(-1\mp3)/8)$ &\\
$6$ & \texttt{aaaabbbababbaaa} & $\Gamma_{6Y}$ & $((-5\pm9)/28,(-13\mp27)/56)$ & $\Gamma_{6Y}$ is a square-faced triangular prism \cite[Figure~2]{ES}\\
$6$ & \texttt{aaaabbbababbbaa} & $\Gamma_{6Z}$ & $(1/5,-3/5)$, $(-\sqrt2/3,(\sqrt2-1)/3)$ &\\
$6$ & \texttt{aaaabbbbbabbbaa} & $\Gamma\textsubscript{6{\it\AE}}$ & $((-1\pm3\sqrt3)/13,(-7\mp5\sqrt3)/26)$ &\\
$6$ & \texttt{aaabbbbbbabbbaa} & $\Gamma\textsubscript{6{\it\OE}}$ & $(-1/2,0)$ & J-spherical, $\mydim(\Gamma\textsubscript{6{\it\OE}})=5$\\
\hline
\end{tabular}
\caption{Spherical $2$-distance sets on $n=6$ points in $\mathbb{R}^4$}
\label{TableSph6}
\normalsize
\end{table}
\begin{table}[htbp]%
\tiny
\begin{tabular}{rllll}
\hline
$n$  & $G(a,b)$ & $G(1,0)$ & $(a^\ast,b^\ast)$ & Remark\\
\hline
$5$ & \texttt{aaaaaaaaaa} & $\Gamma_{5A}$ & $a^\ast=-1/4$ & regular $5$-cell, $\mydim(\overline{\Gamma}_{5A})=4$, $1$-distance set\\
$5$ & \texttt{aaaaaaaaab} & $\Gamma_{5B}$ & $((1-\sqrt7)/6,0)$ & J-spherical, $\mydim(\Gamma_{5B})=3$\\
$5$ & \texttt{aaaaaabbbb} & $\Gamma_{5C}$ & $(0,-1/2)$ & J-spherical, $\mydim(\overline{\Gamma}_{5C})=3$\\
$5$ & \texttt{aaaaabaabb} & $\Gamma_{5D}$ & $(-1/3,0)$ & J-spherical, $\mydim(\Gamma_{5D})=3$\\
$5$ & \texttt{aaaaababaa} & $\Gamma_{5E}$ & $((1-\sqrt5)/4,0)$ & J-spherical, $\mydim(\Gamma_{5E})=3$\\
$5$ & \texttt{aaabbbbbba} & $\Gamma_{5F}$ & $(0,-1/\sqrt6)$ & J-spherical, $\mydim(\overline{\Gamma}_{5F})=3$\\
\hline
\end{tabular}
\caption{Spherical $2$-distance sets on $n=5$ points in $\mathbb{R}^4$}
\label{TableSph5}
\normalsize
\end{table}
\begin{table}[htbp]%
\tiny
\begin{tabular}{rllll}
\hline
$n$  & $G(a,b)$ & $G(1,0)$ & $(a^\ast,b^\ast)$ & Remark\\
\hline
$9$ & \texttt{aaaaaaaaaaaabbbababbbabbabbbabbbabbb} & $\Gamma_{9C}$ & $(1,(3\pm\sqrt5)/2)$ &\\
$9$ & \texttt{aaaaaaaaabaababaabbaaabbbbbbbabbbbbb} & $\Gamma_{9D}$ & $(1,(3+\sqrt5)/2)$ & self-complementary\\
$8$ & \texttt{aaaaaaaaaaaabbbababbbabbabbb} & $\Gamma_{8G}$ & $(1,(3\pm\sqrt5)/2)$ & $\Gamma_{8G}\sim\Gamma_{9C}\setminus\{\ast\}$\\
$8$ & \texttt{aaaaaaaaaaaabbbababbbbaabbbb} & $\Gamma_{8H}$ & $(1,(3\pm\sqrt5)/2)$ & \\
$8$ & \texttt{aaaaaaaaabaababaabbaaabbbbbb} & $\Gamma_{8I}$ & $(1,(3\pm\sqrt5)/2)$ & $\Gamma_{8I}\sim\Gamma_{9D}\setminus\{\ast\}$\\
$8$ & \texttt{aaaaaaaaabaababaabbaabbbbbbb} & $\Gamma_{8J}$ & $(1,(3\pm\sqrt5)/2)$ &  \\
$8$ & \texttt{aaaaaaaaabaabababaabbabbbbbb} & $\Gamma_{8K}$ & $(1,(3\pm\sqrt5)/2)$ & \\
$8$ & \texttt{aaaaaaaaabaabababaabbbbbbbbb} & $\Gamma_{8L}$ & $(1,(3\pm\sqrt5)/2)$ & $\Gamma_{8L}\sim\overline{\Gamma}_{9C}\setminus\{\ast\}$\\
$8$ & \texttt{aaaaaaaaabaabababbbbbbabbbbb} & $\Gamma_{8M}$ & $(1,(3+\sqrt5)/2)$ & $\Gamma_{8M}\sim\Gamma_{9D}\setminus\{\ast\}$, self-complementary\\
$7$ & \texttt{aaaaaaaaaaaabbbababbb} & $\Gamma_{7Q}$ & $(1,(3\pm\sqrt5)/2)$ & $\Gamma_{7Q}\sim\Gamma_{9C}\setminus\{\ast,\ast\}$\\
$7$ & \texttt{aaaaaaaaabaabababbbbb} & $\Gamma_{7R}$ & $(1,(3\pm\sqrt5)/2)$ & $\Gamma_{7R}\sim\Gamma_{9D}\setminus\{\ast,\ast\}$\\
$7$ & \texttt{aaaaaaaaabaababbbbbbb} & $\Gamma_{7S}$ & $(1,(3\pm\sqrt5)/2)$ & $\Gamma_{7S}\sim\overline{\Gamma}_{9C}\setminus\{\ast,\ast\}$\\
$7$ & \texttt{aaaaaaaaababbbbbabbbb} & $\Gamma_{7T}$ & $(1,(3\pm\sqrt5)/2)$ & $\Gamma_{7T}\sim\Gamma_{9D}\setminus\{\ast,\ast\}$\\
$7$ & \texttt{aaaaaaaabbababbabbbbb} & $\Gamma_{7U}$ & $(1,(3\pm\sqrt5)/2)$ & $\Gamma_{7U}\sim\Gamma_{9C}\setminus\{\ast,\ast\}$\\
\hline
\end{tabular}
\caption{General (nonspherical) $2$-distance sets on $n\in\{7,8,9\}$ points in $\mathbb{R}^4$}
\label{TableGen789}
\normalsize
\end{table}
\begin{corollary}
The number of graphs $\Gamma$ for which $\mydim\Gamma=4$ is $211$.
\end{corollary}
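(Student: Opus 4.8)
The plan is to read the corollary off as a bookkeeping consequence of Theorem~\ref{mytheorem1} and the propositions above, by summing over the number of vertices $n$ the number of graphs $\Gamma$ with $\mydim\Gamma=4$. First I would dispose of the extremes. If $\Gamma$ has at most four vertices then the image of any Euclidean representation is a set of at most four points, whose affine hull has dimension at most three, so $\mydim\Gamma\le3$ and no such $\Gamma$ is counted. By \cite{L} there is likewise no graph on $n\ge11$ vertices with $\mydim\Gamma=4$; and on $n=10$ vertices the only one is the triangular graph $T(5)=\Gamma_{10A}$ of Table~\ref{TableSph8910}, contributing $1$.

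It remains to collect the range $5\le n\le9$. Here I would simply invoke what has already been established: there are $7$ graphs on five vertices (Table~\ref{TableSph5}, the row $\Gamma_{5A}$ accounting for the two graphs $K_5$ and $\overline{K_5}$), $145$ on six vertices (the $156$ isomorphism classes of graphs on six vertices minus the $11$ that are disjoint unions of cliques, these being exactly the ones not representable in $\mathbb{R}^4$), and $33$, $20$ and $5$ on seven, eight and nine vertices respectively by Theorem~\ref{mytheorem1}. Since the ranges $n\le4$, $5\le n\le9$, $n=10$ and $n\ge11$ partition all possibilities, the total is $7+145+33+20+5+1=211$.

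The only point that requires attention is to make sure each of the numbers $7,145,33,20,5,1$ counts isomorphism classes of graphs rather than point configurations up to isometry, and that the two notions do not silently disagree. For $n=6$ this is clear, since $156-11$ is by construction a count of graphs; for $n=5$ it is handled directly by Table~\ref{TableSph5}, where the number of nonisometric configurations is in fact infinite and the graph count is the only finite invariant. For $7\le n\le10$ one uses that a $2$-distance set with $\alpha_2>\alpha_1$ pins down its graph uniquely, the edges being the $\alpha_1$-pairs; scanning Tables~\ref{TableSph8910}, \ref{TableSph7} and \ref{TableGen789} shows that the $2$-distance sets occurring in these ranges carry pairwise non-isomorphic graphs---each surviving candidate Gram matrix contributing either a single graph or a complementary pair of graphs, with no coincidences between different candidate matrices or between the spherical and non-spherical lists---so the graph counts there agree with the $2$-distance set counts $33,20,5,1$ of Table~\ref{Table1}. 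No genuinely new difficulty arises: all the substance---the backtrack search through candidate Gram matrices and the Gr\"obner basis analysis of the survivors---already lies in the proofs of the propositions, and the corollary is arithmetic on top of those.
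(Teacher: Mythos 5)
Your argument is the same as the paper's: the corollary is pure bookkeeping, summing the counts $7+145+33+20+5+1=211$ over $n\in\{5,\dots,10\}$ after ruling out $n\le4$ and $n\ge11$, exactly as in the paper's one-line proof citing \cite{ES}, \cite{L} and Theorem~\ref{mytheorem1}. Your extra care in checking that the isometry-class counts of $2$-distance sets for $7\le n\le 10$ coincide with counts of isomorphism classes of graphs is a point the paper passes over silently, and is a welcome addition rather than a deviation.
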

\begin{proof}
This follows from earlier results in \cite{ES}, \cite{L}, and Theorem~\ref{mytheorem1}: the number of such graphs on $n\in\{5,6,7,8,9,10\}$ vertices is $7$, $145$, $33$, $20$, $5$, and $1$, respectively, and there are no such graphs on $n<5$ or $n>10$ vertices.
\end{proof}
We conclude this manuscript with the following remark: the classification of the maximum $3$-distance sets in $\mathbb{R}^4$ has recently been carried out in \cite{FSZ}, and therefore data on the (not necessarily largest) candidate Gram matrices is readily available for that case too (see \cite[Table~5 and 7]{FSZ}). However, the individual analysis and ultimately the presentation of those tens of thousands of matrices would require considerably more efforts.

\end{document}